\documentclass[pdflatex,sn-mathphys-num]{sn-jnl}

\usepackage[utf8]{inputenc}
\usepackage{amsmath, amsthm, amssymb, amsfonts}
\usepackage{graphicx}
\usepackage{mathtools}
\usepackage{enumerate}
\usepackage{color}
\usepackage{epsfig}
\usepackage{float}
\usepackage{adjustbox} 
\usepackage{cancel}
\usepackage{epstopdf}
\usepackage{hyperref}
\usepackage{enumitem} 
\usepackage{accents}

\newcommand{\trasp}{^\top}

\newcommand{\M}{\mathcal{M}}
\newcommand{\TM}{\mathcal{T}_Y\mathcal{M}}
\newcommand{\ee}{\mathrm{e}}

\newcommand{\PhiA}{\Phi_{\frac{\tau}{2}}^A}
\newcommand{\PhiG}{\Phi_{\tau}^G}
\newcommand{\phiG}{\widetilde{\Phi}_{\tau}^G}

\newcommand{\dd}{\, \mathrm{d}}

\def\phi{\varphi}

\newtheorem{theorem}{Theorem}
\newtheorem{lemma}{Lemma}
\newtheorem{prop}{Proposition}
\newtheorem{hp}{Assumption}
\newtheorem{Remark}{Remark}

\title[Low-Rank Strang Splitting for Matrix ODEs]{Convergence of a Low-Rank Strang Splitting for Stiff Matrix Differential Equations}

\author*[1]{\fnm{Carmela} \sur{Scalone}}\email{carmela.scalone@univaq.it}
\author[2]{\fnm{Nicola} \sur{Guglielmi}}\email{nicola.guglielmi@gssi.it}

\affil*[1]{\orgdiv{Department of Engineering, Computer Science and Mathematics}, \orgname{University of L'Aquila}, \orgaddress{\city{L'Aquila}, \country{Italy}}}
\affil[2]{\orgdiv{Division of Mathematics}, \orgname{Gran Sasso Science Institute}, \orgaddress{\city{L'Aquila}, \country{Italy}}}

\abstract{
We propose and analyze a second-order Strang splitting method for a class of stiff matrix differential equations with Sylvester-type structure. The method splits the dynamics into a stiff linear part, treated exactly via matrix exponentials, and a nonlinear part, integrated by a second-order dynamical low-rank (DLR) scheme. Our main contribution is a rigorous convergence proof showing that, under suitable assumptions, the overall scheme achieves second-order accuracy. Numerical experiments confirm the theoretical results and demonstrate the robustness and efficiency of the proposed method.
}
\keywords{low-rank approximation, matrix differential equations, Strang splitting, stiff problems}

\begin{document}

\maketitle

\section*{Introduction}

We consider the matrix differential equation
\begin{equation} \label{eq}
    \dot{X}(t) = AX(t) + X(t)A^* + G(t, X(t)), \qquad X(t_0) = X^0,
\end{equation}
where $X(t) \in \mathbb{C}^{m \times m}$, the matrix $A \in \mathbb{C}^{m \times m}$ is time-independent, and $G : [t_0, \infty) \times \mathbb{C}^{m \times m} \to \mathbb{C}^{m \times m}$ is a nonlinear operator that is assumed to be non-stiff. The structure of~\eqref{eq} arises naturally from the semi-discretization of semilinear parabolic partial differential equations, where the stiffness is due to the discretization of the elliptic operator, and the nonlinearity $G$ is typically smooth.

Equation~\eqref{eq} is often suitable to low-rank approximation. This is not only motivated by computational considerations in large-scale settings, but also by the qualitative structure of the solution itself (see~\cite{HM94}). In this context, dynamical low-rank approximation~\cite{KL} provides a powerful and flexible framework for efficiently computing low-rank solutions of matrix-valued differential equations. Several dynamical low-rank integrators have been proposed and analyzed in recent years (see~\cite{LO,CL, CKL, HMS, KLW}), including the second-order Backward Update Galerkin (BUG) scheme introduced in~\cite{BUG2}, which is of particular relevance to the present work.

In the non-stiff setting, many of these integrators perform well and offer robust convergence properties. However, such integrators show an unfavorable dependence of the error bounds on the Lipschitz constant of the right-hand side. To address this limitation, several strategies have been proposed in the literature, including implicit schemes~\cite{AC25}, exponential integrators~\cite{CV}, and splitting-based approaches~\cite{OPW}.

The Lie-Trotter-type splitting method proposed in~\cite{OPW} represents the starting point for the presented analysis. There, the stiff linear component is integrated exactly using matrix exponentials, while the nonlinear term is treated via the projector-splitting integrator. The resulting scheme is shown to be robust with respect to both stiffness and rank truncation, with convergence guarantees independent of the norm of $A$.

Building on the approach in~\cite{OPW}, we propose a second-order Strang splitting method that applies the same vector field decomposition as in~\eqref{eq}. The linear part is treated via exponential integration, while the nonlinear component is approximated using the second-order BUG integrator from~\cite{BUG2}. 
Notice that, the error bound of ~\cite{BUG2} depends on the Lipschitz constant of the right-hand side of the considered matrix ODE, for this reason, it is employed only in the solution of the non-stiff part.

The resulting scheme preserves low-rank structure and is explicit in time.

Passing from the first-order Lie-Trotter to the second-order Strang splitting poses several new challenges in the analysis. In particular, the interaction between the nonlinear low-rank integrator and the exponential treatment of the stiff terms requires careful control.
Related approaches have recently been considered in \cite{YYY}, where low-rank Strang-type methods were tested on specific model problems. In contrast, to the best of our knowledge, the present work provides the first rigorous convergence analysis for a second-order dynamical low-rank splitting method of this type, with extensive numerical validation.

The structure of the paper is as follows. In Section~2, we introduce the Strang splitting scheme and its low-rank formulation. Section~3 is devoted to the convergence analysis, under suitable regularity and compatibility assumptions. Finally, in Section~4 we present a series of numerical experiments that confirm the theoretical findings and illustrate the effectiveness of the proposed scheme.

\section{A low-rank approximation of stiff matrix differential equations}\label{procedure}

\subsection{Splitting into three subproblems}\label{split}

We aim to compute low-rank approximations to the solution of~\eqref{eq}, where the vector field naturally decomposes into a stiff linear part and a nonlinear term, see \cite{MQ, OPW}. To this end, we apply a second-order Strang splitting strategy.

We split~\eqref{eq} into three subproblems:
\begin{align}
\label{sub1}
&\dot{X}_1(t) = AX_1(t) + X_1(t)A^*, \qquad X_1(t_0) = X_1^0, \\
\label{sub2}
&\dot{X}_2(t) = G(t, X_2(t)), \qquad X_2(t_0) = X_2^0, \\
\label{sub3}
&\dot{X}_3(t) = AX_3(t) + X_3(t)A^*, \qquad X_3(t_0) = X_3^0,
\end{align}
where the linear parts~\eqref{sub1} and~\eqref{sub3} are identical up to the initial condition. We denote by $\Phi^A_\tau$ the exact solution operator of the linear subproblem over a time step $\tau$, and by $\Phi^G_\tau$ the solution operator of the nonlinear equation.
An approximate solution of \eqref{eq} is obtained by applying the Strang splitting scheme with stepsize $\tau$:
\begin{equation}
\label{strang}
\mathcal{S}_\tau := \Phi^A_{\frac{\tau}{2}} \circ \Phi^G_\tau \circ \Phi^A_{\frac{\tau}{2}}.
\end{equation}
We refer to \eqref{strang} as the full-rank Strang splitting.
An approximation $X^1$ of the solution $X(t)$ of \eqref{eq} at $t_1 =  t_0+\tau$, starting from $X^0 = X^0_1$, is given by
\[
X^1 = \mathcal{S}_\tau(X^0) =  \left( \Phi^A_{\frac{\tau}{2}} \circ \Phi^G_\tau \circ \Phi^A_{\frac{\tau}{2}}\right) \left( X^0 \right) 
\]
Observe that the numerical solution at time $t_k = t_0 + k \tau$ is $X^{k} = \mathcal{S}^k_\tau(X^0)$.

 The exact solution of the linear subproblems \eqref{sub1}, with stepsize $\tau /2$, is given in closed form as
\[
X_0^2 = \Phi^A_{\frac{\tau}{2}}(X^0) = \ee^{\frac{\tau}{2} A} \, X^0 \, \ee^{\frac{\tau}{2} A^*}
\]
and serves as initial condition for the subproblem \eqref{sub2}.
Once the solution of the subproblem \eqref{sub2} is calculated with stepsize $\tau$
\[ X_3^0 = \Phi_{\tau}^G\left(X_2^0 \right)  \]
it is used as initial condition of the subproblem \eqref{sub3}, which is solve in closed form by the exponential with stepsize $\tau/2$. After this steps we obtain the approximated solution $X^1$, which is full rank, in general.
As we will describe in the section \ref{lrsol} below, while for the linear problem we automatically have conservation of the low rank property of the solution, for the nonlinear part appropriate time integration is required.

\subsection{The low-rank integrator}\label{lrsol}
We now describe how to adapt the Strang splitting scheme to the low-rank setting. Let $r$ be a fixed target rank, and define the rank-$r$ manifold
\[
\M := \left\{ Y \in \mathbb{C}^{m \times m} : \operatorname{rank}(Y) = r \right\}.
\]
Given an initial low-rank factorization $X^0 = U_0 S_0 V_0^*$, with $U_0, V_0 \in \mathbb{C}^{m \times r}$ and $S_0 \in \mathbb{C}^{r \times r}$, we seek a low-rank approximation $X(t) \approx U(t) S(t) V(t)^*$ that evolves on the manifold $\M$.
We denote by $\TM$ the tangent space of the low-rank manifold $\M$ at a rank-$r$ matrix $Y$. 
We observe that for any $Y \in \M$, $AY+YA^* \in\TM$.
Therefore, for the linear subproblems~\eqref{sub1} and~\eqref{sub3}, the solution remains in $\M$ for all time if the initial value lies in $\M$.

Hence, no rank truncation is required and the exponential actions can be computed directly on the factors $U$ and $V$ of the low rank approximation of the solution.

For the nonlinear subproblem~\eqref{sub2}, the solution typically leaves the rank-$r$ manifold. To preserve the rank constraint, we apply the dynamical low-rank approximation framework~\cite{KL}, which projects the time derivative onto the tangent space $\mathcal{T}_Y \M$ at each time $t$. Specifically, the evolution is governed by the projected differential equation
\[
\dot{Y}(t) = P(Y(t)) G(t, Y(t)),
\]
where $P(Y)$ denotes the orthogonal projection onto $\mathcal{T}_Y \M$, which can be integrated using dedicated schemes.

In this work, we adopt the midpoint BUG integrator (BUG2) in~\cite{BUG2}, a second-order method for numerically solving projected matrix differential equations of the form~\eqref{sub2}. By combining this integrator with exact exponential propagation of the linear terms, we obtain a time integration scheme for~\eqref{eq} that operates entirely within a prescribed low-rank manifold.
Each time step of the low-rank Strang splitting method involves the following three stages:
\begin{enumerate}
    \item A half-step exponential integration of the stiff linear subproblem~\eqref{sub1}, applied to the low-rank factors;
    \item A full-step numerical integration of the nonlinear subproblem~\eqref{sub2}, projected onto the tangent space using the BUG2 scheme;
    \item A second half-step exponential integration, identical in structure to the first.
\end{enumerate}
We denote the resulting scheme as the \emph{low-rank Strang splitting method}, and define its update over one time step $\tau$ as
\begin{equation} \label{approx_strang}
	\tilde{\mathcal{S}}_{\tau} := \PhiA \circ \phiG \circ \PhiA,
\end{equation}
where $\PhiA$ denotes the exact flow associated with the stiff linear part~\eqref{sub1}, and $\phiG$ is the numerical solution operator corresponding to the dynamical low-rank approximation of the nonlinear subproblem~\eqref{sub2} via BUG2.

Given a rank-$r$ approximation $Y^0 = Y(t_0) \in \M$, where $\M$ is the manifold of rank-$r$ matrices, the approximation at time $t_1 = t_0 + \tau$ is computed as
\begin{equation} \label{method}
	Y^1 = \tilde{\mathcal{S}}_{\tau}(Y^0) = \left( \PhiA \circ \phiG \circ \PhiA \right)(Y^0).
\end{equation}
Iterating this construction, we define the sequence $\{Y^k\}_{k \geq 1}$ by
\[
Y^k = \tilde{\mathcal{S}}_{\tau}^k(Y^0), \qquad t_k = t_0 + k\tau.
\]

For completeness, we recall in the next section the BUG2 integrator from~\cite{BUG2}, used in our framework to integrate the nonlinear projected dynamics. The method is both rank-robust and second-order accurate in time. This latter property is essential to ensure that the full low-rank Strang splitting method achieves second-order convergence as a whole.



\subsection{The second order BUG integrator}
\label{sec:aug-BUG}
This section is dedicated to a brief summary of the augmented BUG integrator \cite{CKL} and the second-order midpoint BUG variant from \cite{BUG2}, which is used in our framework to integrate the nonlinear subproblem~\eqref{sub2} in low-rank form. 
\subsubsection{The augmented BUG integrator of \cite{CKL}}

We consider a matrix differential equation of the form
\[
\dot{Y}(t) = F(t, Y(t)).
\]
One time step from \( t_0 \) to \( t_1 = t_0 + \tau \), starting from a rank-\(r\) matrix \( Y_0 = U_0 S_0 V_0^* \), yields a new low-rank approximation \( Y_1 = U_1 S_1 V_1^* \). The method proceeds as follows:

\paragraph{1. Basis update (augmented bases).}  
We compute updated bases \( \widehat{U}, \widehat{V} \) of rank \( \widehat{r} = 2r \) by integrating two auxiliary matrix ODEs:

\begin{itemize}
\item \textbf{K-step:}
\[
\dot{K}(t) = F(t, K(t) V_0^*) V_0, \quad K(t_0) = U_0 S_0,
\]
followed by orthogonalization:
\[
\widehat{U} = \mathrm{orth}(U_0, K(t_1)), \qquad \widehat{M} = \widehat{U}^* U_0.
\]

\item \textbf{L-step:}
\[
\dot{L}(t) = F(t, U_0 L(t)^*)^* U_0, \quad L(t_0) = V_0 S_0^*,
\]
followed by
\[
\widehat{V} = \mathrm{orth}(V_0, L(t_1)), \qquad \widehat{N} = \widehat{V}^* V_0.
\]
\end{itemize}

\paragraph{2. Galerkin step.}  
We update the core matrix by integrating the projected equation:
\[
\dot{\widehat{S}}(t) = \widehat{U}^* F(t, \widehat{U} \widehat{S}(t) \widehat{V}^*) \widehat{V}, \quad \widehat{S}(t_0) = \widehat{M} S_0 \widehat{N}^*.
\]

The result is a rank-\(\widehat{r}\) approximation:
\[
\widehat{Y}_1 = \widehat{U} \widehat{S}(t_1) \widehat{V}^*.
\]

\paragraph{3. Truncation.}
An SVD-based truncation is applied to reduce the rank back to \(r\), either exactly or adaptively via a threshold on the singular values.

\vspace{1em}
\subsubsection{Midpoint BUG integrator (second order)}
\label{sec:midpoint-bug}

To achieve second-order accuracy in time, we apply a midpoint variant as proposed in \cite{BUG2}. Given \( Y_0 = U_0 S_0 V_0^*\), we compute an approximation \( Y_1 = U_1 S_1 V_1^* \approx Y(t_1) \) as follows:

\begin{enumerate}
\item \textbf{Midpoint prediction:}
Apply a BUG step of size \( \tau/2 \) to obtain:
\[
\widehat{Y}_{1/2} = \widehat{U}_{1/2} \widehat{S}_{1/2} \widehat{V}_{1/2}^* \approx Y(t_0 + \tau/2).
\]

\item \textbf{Galerkin update:}
Construct new bases of rank \( \overline{r} \le 4r \):
\[
\begin{aligned}
\overline{U} &= \mathrm{orth}(\widehat{U}_{1/2}, \tau F(t_{1/2}, \widehat{Y}_{1/2}) \widehat{V}_{1/2}), \\
\overline{V} &= \mathrm{orth}(\widehat{V}_{1/2}, \tau F(t_{1/2}, \widehat{Y}_{1/2})^* \widehat{U}_{1/2}),
\end{aligned}
\]
and set
\[
\overline{M} = \overline{U}^* U_0, \qquad \overline{N} = \overline{V}^* V_0.
\]

\item \textbf{Final integration:}
Integrate the core equation from \( t_0 \) to \( t_1 = t_0 + \tau \):
\[
\dot{\overline{S}}(t) = \overline{U}^* F(t, \overline{U} \overline{S}(t) \overline{V}^*) \overline{V}, \quad \overline{S}(t_0) = \overline{M} S_0 \overline{N}^*.
\]
This yields the rank-\(\overline{r}\) matrix:
\[
\overline{Y}_1 = \overline{U} \, \overline{S}(t_1) \, \overline{V}^*.
\]
\end{enumerate}

\paragraph{Truncation:}
A final SVD-based truncation reduces \( \overline{Y}_1 \) to rank \( r \) or adaptively selects \( r_1 \le \overline{r} \) such that
\begin{equation}
\label{adaptive_formula}
\left( \sum_{j = r_1 + 1}^{\overline{r}} \sigma_j^2 \right)^{1/2} \le \theta,
\end{equation}

where \( \sigma_j \) are the singular values of \( \overline{S}(t_1) \). The final approximation is then
\[
Y_1 = U_1 S_1 V_1^* \approx Y(t_1).
\]

\begin{Remark}
In \cite{BUG2}, some variants of a second order BUG method are presented. We refer to the midpoint second order BUG. Extensions to other variants are straightforward.
\end{Remark}

\section{Full convergence analysis}
 \label{conv_theorem}

\subsection{The main convergence result}

In this section, we present the analytical framework used to establish the convergence of the proposed low-rank Strang splitting method. 
Throughout the analysis, we retain the highly effective notation of \cite{OPW}.
The assumptions are formulated below, followed by the main convergence theorem. A detailed proof is provided in Section~\ref{proofs}.

Our analysis does not require the Lipschitz constants of the full right-hand side of the matrix differential equation~\eqref{eq}. This makes it well-suited to stiff problems.

We consider the Hilbert space $\mathbb{C}^{m \times m}$ equipped with the Frobenius norm $\| \cdot \|$. Let $A \in \mathbb{C}^{m \times m}$ be a fixed matrix, and let $G: [t_0, T] \times \mathbb{C}^{m \times m} \to \mathbb{C}^{m \times m}$ be a smooth nonlinear mapping. We assume that the exact solution $X(t)$ to~\eqref{eq} exists for all $t \in [t_0, T]$. 
Let $Y^0$ a rank $r$ approximation of $X^0$ satisfying
\begin{equation}
\label{delta}
\| X^0 - Y^0 \| \leq \delta,
\end{equation}
for some $\delta \geq 0$.

Let $P(Y)$ denotes the projection onto the tangent space in $Y$ to $\mathcal{M}_r$, we  assume that the rank-$r$ approximation 

\[
Y(t) = \ee^{(t - t_0)A} Y^0 \ee^{(t - t_0)A^*} + \int_{t_0}^t \ee^{(t - s)A} P(Y(s)) G(s, Y(s)) \ee^{(t - s)A^*} \,\mathrm{d}s.
\]

of the equation \eqref{eq} exists for $t\in \left[ t_0,T\right] $.

We make the following assumptions.

\begin{hp} \label{hp_split}
We assume that:
\begin{enumerate}[label=(\textit{\alph*}),ref=(\textit{\alph*})]
    \item \label{ass:A}
    There exist constants $\omega \in \mathbb{R}$ and $C_s > 0$ such that
    \begin{align}
    \label{bound}
    \| \ee^{tA} Z \ee^{tA^*} \| &\leq \ee^{\omega t} \| Z \|, \\
    \label{smooth_pr}
    \| \ee^{tA}(AZ + ZA^\top)\ee^{tA^*} \| &\leq \frac{C_s}{t} \, \ee^{\omega t} \| Z \|,
    \end{align}
    for all $t > 0$ and $Z \in \mathbb{C}^{m \times m}$. The constants $\omega$ and $C_s$ are independent of the spatial discretization parameter $m$ and, consequently, of the stiffness of the problem.
    This assumption reflects stability and smoothing properties of the exponential semigroup generated by $A$, see \cite{EN,pazy}. These are commonly satisfied when $A$ stems from the spatial discretization of a uniformly elliptic operator. In vectorized form, the bounds can be written as
\[
\| \ee^{t\mathcal{A}} z \|_2 \leq \ee^{\omega t} \| z \|_2, \qquad
\| \ee^{t\mathcal{A}} \mathcal{A} z \|_2 \leq \frac{C_s}{t} \ee^{\omega t} \| z \|_2,
\]
where $\mathcal{A} = I \otimes A + A \otimes I$ and $z = \mathrm{vec}(Z)$.

    \item \label{ass:G}
 
    The nonlinearity $G$ is  sufficiently smooth so that all derivatives appearing in the analysis are well-defined, in a neighborhood of the exact solution.
This implies local Lipschitz continuity of $G$ in a neighborhood of $X(t)$, with constants $L$ and $B$ such that:
\begin{equation}
\label{LB}
\| G(t, X_1) - G(t, X_2) \| \leq L \| X_1 - X_2 \|, \qquad \| G(t, X_1) \| \leq B,
\end{equation}
for all $X_1, X_2$ such that $\|  X_1 - X(t) \| \leq \gamma$ and in $\|  X_2 - X(t) \| \leq \gamma$, for all $t \in [t_0,T]$ and a suitable $\gamma >0$. The constants $B$ and $L$ may depend on the choice of the neighborhood. 
    \item \label{ass:MR}
    There exists $\varepsilon > 0$ such that for all $t \in [t_0, T]$, we have
    \[
    G(t, Y(t)) = M(t, Y(t)) + R(t, Y(t)),
    \]
    with $M(t, Y(t)) \in \mathcal{T}_{Y(t)} \mathcal{M}$ and $\| R(t, Y(t)) \| \leq \varepsilon$.
   
This reflects the fact that $G(t, Y(t))$ lies approximately in the tangent space $\mathcal{T}_{Y(t)} \mathcal{M}$. The remainder $R(t, Y(t))$ must be small for the low-rank approximation to be accurate and stable.

    \item \label{ass:compatibility}
    There exists a constant $C > 0$, independent by $m$, such that
    \[
    \| AG(t, X(t)) + G(t, X(t))A^*\| \leq C, \quad \forall t \in [t_0, T].
    \]
    This condition is typically satisfied when $A$ arises from the semi-discretization of an elliptic operator with homogeneous boundary conditions, and $G$ satisfies a compatibility condition at the boundary; see \cite{EO1,EO2}.
\end{enumerate}
\end{hp}

\bigskip

To facilitate the analysis of the low-rank integrator $\widetilde{\mathcal{S}}_\tau$ \eqref{approx_strang}, we decompose the global error as follows:
\begin{enumerate}[label=(\roman*)]
    \item The error of the full-rank Strang splitting:
    \[
    E^n_{\mathrm{sp}} := X(t_0 + n\tau) - \mathcal{S}_\tau^n(X^0).
    \]
    \item The propagation of the initial low-rank approximation error:
    \[
    E^n_{\delta} := \mathcal{S}_\tau^n(X^0) - \mathcal{S}_\tau^n(Y^0).
    \]
    \item The error introduced by the low-rank approximation:
    \[
    E^n_{\mathrm{lr}} := \mathcal{S}_\tau^n(Y^0) - \widetilde{\mathcal{S}}_\tau^n(Y^0).
    \]
\end{enumerate}
with $\mathcal{S}_\tau$ defined in \eqref{strang}.
The total error is then
\[
X(t_0 + n\tau) - \widetilde{\mathcal{S}}_\tau^n(Y^0) = E^n_{\mathrm{sp}} + E^n_\delta + E^n_{\mathrm{lr}}.
\]

\medskip

We now state the main convergence result of the paper, which provides a rigorous global error bound for the proposed low-rank Strang splitting integrator.

\begin{theorem}[Global error of the low-rank Strang splitting] \label{main_th}
Under Assumption~\ref{hp_split}, there exists $\tau_0 > 0$ such that, for all $0 < \tau \leq \tau_0$ and $n$ such that $t_0 + n\tau \leq T$, the global error satisfies
\[
\| X(t_0 + n\tau) - \widetilde{\mathcal{S}}_\tau^n(Y^0) \| \leq c_0 \tau^2(1 + |\log \tau|) + c_1 \delta + c_2 \tau \varepsilon,
\]
where the constants $c_0, c_1, c_2$ depend on $\omega$, $C_s$, $L$, $B$ and $T$, but are independent of $\tau$, $\varepsilon$, $\delta$ and $n$. The constants $\delta$ and $\varepsilon$ are defined in \eqref{delta} and \ref{hp_split}\ref{ass:MR}.
\end{theorem}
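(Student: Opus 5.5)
The plan is to bound the three terms $E^n_{\mathrm{sp}}$, $E^n_\delta$, $E^n_{\mathrm{lr}}$ separately and add them. Each bound follows the same pattern: a local error estimate is propagated through a stability estimate for the exact Strang map $\mathcal{S}_\tau$. We telescope with Lady Windermere's fan. The central stability fact is that, for $Z_1,Z_2$ in the $\gamma$-neighbourhood of the exact solution,
\[
\|\mathcal{S}_\tau(Z_1)-\mathcal{S}_\tau(Z_2)\|\le \ee^{(\omega+L)\tau}\|Z_1-Z_2\|.
\]
This follows from \eqref{bound} applied to the two half-steps $\PhiA$ and from Gronwall's inequality for $\PhiG$ using \eqref{LB}. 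It involves neither $\|A\|$ nor $m$. An induction on $n$, with $\tau_0$ small, keeps all iterates inside the neighbourhood, which justifies using $L$ and $B$ throughout.

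\textbf{Splitting error $E^n_{\mathrm{sp}}$.} Write the exact solution in variation-of-constants form, $X(t_0+\tau)=\ee^{\tau A}X^0\ee^{\tau A^*}+\int_0^\tau \ee^{(\tau-s)A}G(\cdot)\ee^{(\tau-s)A^*}\dd s$. Expand $\mathcal{S}_\tau(X^0)$ similarly, with $\PhiG(Z)=Z+\int_0^\tau G\,\dd s$ sandwiched between half-step exponentials. The local defect is then, up to $O(\tau^3)$ Lipschitz-type terms, a quadrature error of the midpoint-type rule applied to $s\mapsto \ee^{(\tau-s)A}G(t_0+s,X(t_0+s))\ee^{(\tau-s)A^*}$. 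The $s$-derivative of this integrand produces the commutator-type term $AG+GA^*$, which is controlled by Assumption~\ref{hp_split}\ref{ass:compatibility} with a constant independent of $m$. This gives a local error of the form $\tau^2\,\ee^{\tau A}(\cdots)$, i.e. $O(\tau^2)$ but with smoothing structure. In the global sum $\sum_k \mathcal{S}^{n-k}$ (linearized around the exact flow), I pull the factor $\ee^{(n-k)\tau A}$ through. I then use the smoothing bound \eqref{smooth_pr} to trade one power of $A$ for $1/((n-k)\tau)$, obtaining $\tau^3\sum_{k<n} C_s/((n-k)\tau)\le C\tau^2(1+|\log\tau|)$. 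The $k=n-1$ term is handled directly. This is the source of the logarithm, as in the parabolic Strang analyses of Jahnke–Lubich and Einkemmer–Ostermann. I expect this step to be the main obstacle: the exponential factors must be extracted carefully so that each defect genuinely carries $\ee^{jA}A$ structure rather than a bare $A$.

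\textbf{Initial error $E^n_\delta$.} This follows immediately from the stability estimate: $\|E^n_\delta\|\le \ee^{(\omega+L)(T-t_0)}\delta=:c_1\delta$.

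\textbf{Low-rank error $E^n_{\mathrm{lr}}$.} Telescope as
\[
E^n_{\mathrm{lr}}=\sum_{k=0}^{n-1}\Big(\mathcal{S}_\tau^{n-k}(Y^k)-\mathcal{S}_\tau^{n-k-1}(\widetilde{\mathcal{S}}_\tau(Y^k))\Big),
\]
so that only local errors $\|\mathcal{S}_\tau(Y^k)-\widetilde{\mathcal{S}}_\tau(Y^k)\|$ are needed. By \eqref{bound} these are at most $\ee^{\omega\tau}\|\PhiG(Z)-\phiG(Z)\|$ with $Z=\PhiA(Y^k)$. Here I invoke the local error bound of the midpoint BUG integrator from \cite{BUG2}. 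For the nonstiff problem \eqref{sub2}, where $G$ has Lipschitz constant $L$ and bound $B$, and under the near-tangency Assumption~\ref{hp_split}\ref{ass:MR} transported to $Z$, this gives a local error of $C(\tau^3+\tau\varepsilon)$. The constants depend only on $L$, $B$ and the derivatives of $G$. The truncation contribution is absorbed into the $\varepsilon$ term for the fixed-rank version.

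Transferring \ref{ass:MR} from $Y(t)$ to the points $\PhiA(Y^k)$ is the second delicate step. I would handle it by a perturbation argument using the closeness of $Y^k$ to $Y(t_k)$, available by the inductive hypothesis, together with the Lipschitz bound on $G$. Summing the $n\le (T-t_0)/\tau$ local errors with the stability factor yields $c\tau^2+c_2\varepsilon$. Finally, adding the three bounds gives the stated estimate; $\tau_0$ is chosen so that every iterate remains within $\gamma$ of $X(t)$.
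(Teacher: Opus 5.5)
The structure of your proposal matches the paper's proof: the same three-term decomposition, stability of $\mathcal{S}_\tau$ with factor $\ee^{(\omega+L)\tau}$, the Peano-kernel/smoothing/compatibility argument producing the logarithm, and the BUG2 local error plus telescoping for $E^n_{\mathrm{lr}}$.

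\textbf{The gap is your final sentence.} Your own summation gives $\|E^n_{\mathrm{lr}}\|\le c\tau^2+c_2\varepsilon$, and this is all you can get: local errors $C\tau(\tau^2+\varepsilon)$ accumulated over $n\le (T-t_0)/\tau$ steps sum to $C(\tau^2+\varepsilon)$. That is not the term $c_2\tau\varepsilon$ in the statement, so ``adding the three bounds gives the stated estimate'' does not follow. The paper's Proposition~\ref{globerr} asserts $C_1\varepsilon\tau$ from the identical recursion, which is the same arithmetic slip.

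No sharper argument can close this, because the statement is false in that term. Take $m=2$, $r=1$, $A=0$, $G\equiv\operatorname{diag}(0,\varepsilon)$ and $X^0=Y^0=\operatorname{diag}(1,0)$, so $\delta=0$.
\begin{itemize}
\item (a), (b), (d) hold with $\omega=0$, $L=0$ and $AG+GA^*=0$.
\item Since $P(Y^0)G=0$, we have $Y(t)\equiv Y^0$, and (c) holds with $M=0$, $\|R\|=\varepsilon$.
\item The exact solution $X(T)=\operatorname{diag}(1,\varepsilon(T-t_0))$ lies at Frobenius distance $\varepsilon(T-t_0)$ (when this is $\le 1$) from every rank-one matrix. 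This includes $\widetilde{\mathcal{S}}_\tau^n(Y^0)$, for every $\tau$.
\item The claimed bound $c_0\tau^2(1+|\log\tau|)+c_2\tau\varepsilon$ tends to $0$ as $\tau\to0$, a contradiction.
\end{itemize}
What your argument actually proves, and the correct form of the theorem, is the bound with $c_2\varepsilon$ in place of $c_2\tau\varepsilon$. This is also the form of the Lie--Trotter result in \cite{OPW}.

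\textbf{The step you flag as delicate also does not work as proposed.} You want to transfer Assumption~\ref{hp_split}\ref{ass:MR} from $Y(t)$ to a nearby point such as $Z=\PhiA(Y^k)$ by perturbation. This requires bounding $(I-P(Z))M=(P(Y(t))-P(Z))M$, which costs roughly $\|Z-Y(t)\|\,\|M\|/\sigma_r(Y(t))$, i.e.\ the curvature of $\M$. That constant is not in your admitted list and destroys the robustness to small singular values you want to inherit from BUG2. Moreover, your induction controls $\|Y^k-X(t_k)\|$, not $\|Y^k-Y(t_k)\|$.

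The standard remedy is to assume the $\varepsilon$-splitting for all rank-$r$ matrices in a neighbourhood of $X(t)$, as in \cite{CKL,BUG2}. The paper does not address this point either; it simply cites the BUG2 local bound.

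Finally, keeping every iterate within $\gamma$ of $X(t)$ requires $c_1\delta+c_2\varepsilon$ to be small relative to $\gamma$. No choice of $\tau_0$ alone can ensure this.
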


\begin{Remark}
The step size threshold $\tau_0$ depends only on the local Lipschitz constant $L$ and the regularity of the solution.
\end{Remark}

\subsection{Detailed convergence proofs} \label{proofs}

This section presents a complete convergence analysis of the low-rank Strang splitting scheme~\eqref{approx_strang}. The total error is decomposed into three contributions:

\begin{itemize}
    \item the error of the full-rank Strang splitting (Section~\ref{err_spl}),
    \item the error due to the low-rank approximation (Section~\ref{err_lr}),
    \item the propagation of the initial approximation error (Section~\ref{final_proof}).
\end{itemize}

Each contribution is estimated separately, and the final result in Theorem~\ref{main_th} follows by combining the bounds.

As in~\cite{OPW}, the constants appearing in the analysis, such as the Lipschitz constant $L$ and the uniform bound $B$ in Assumption~\ref{hp_split}\ref{ass:G}, depend on the size of a neighborhood around the exact solution. To control them, we ensure that all numerical approximations remain in a fixed compact neighborhood $\mathcal{U} \subset \mathbb{C}^{m \times m}$ of the exact solution over the integration interval. 
This follows recursively from the given proofs, taking into account that the arising constants can be controlled in terms of $L$, $B$ and the final time $T$. An appropriate choice of the maximum step size $\tau_0$ finally guarantees that all considered approximations stay in $\mathcal{U}$.

\subsubsection{The error of the full-rank Strang splitting} \label{err_spl}

The convergence of the full-rank splitting scheme \eqref{strang} is stated in the following theorem. 
The ideas used in the proof can be traced back to, e.g., \cite{EO1}.

\begin{prop}[Global error of the full-rank Strang splitting]\label{spl_err} 
	Under Assumption~\ref{hp_split}, the full-rank Strang splitting \eqref{strang} is second-order convergent, i.e., the error bound
	\[ \lVert X(t_0+n\tau) - \mathcal{S}_\tau^n(X^0) \rVert \leq C \tau^2 (1+\lvert \log \tau \rvert) \]
	holds uniformly on $t_0\leq t_0+n\tau \leq T$. The constant $C$ depends on $\omega$, $C_s$, $L$, $B$ and $T$, but is independent of $\tau$ and $n$. 
\end{prop}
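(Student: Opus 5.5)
We follow the classical Lady Windermere's fan argument for exponential splittings, as in \cite{EO1}. We write the error as a telescoping sum of local errors propagated by the numerical flow,
\[
X(t_n) - \mathcal{S}_\tau^n(X^0) = \sum_{k=0}^{n-1} \Bigl( \mathcal{S}_\tau^{n-k-1}\bigl(X(t_{k+1})\bigr) - \mathcal{S}_\tau^{n-k-1}\bigl(\mathcal{S}_\tau(X(t_k))\bigr) \Bigr).
\]
The proof then has two parts: a stability bound for $\mathcal{S}_\tau$, and a local error bound of order three whose leading term has a structure that sums to $\tau^2(1+|\log\tau|)$.

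\textbf{Stability.} By \eqref{bound}, $\PhiA$ is a contraction up to $\ee^{\omega\tau/2}$. Since $\PhiG$ is the flow of $G$, Gronwall and \eqref{LB} give Lipschitz constant $\ee^{L\tau}$ in the neighborhood $\mathcal U$. Hence $\|\mathcal{S}_\tau(Z_1)-\mathcal{S}_\tau(Z_2)\|\le \ee^{(\omega+L)\tau}\|Z_1-Z_2\|$, and powers are bounded by $\ee^{(\omega+L)T}$. Staying in $\mathcal U$ follows by induction, using the error bound itself and choosing $\tau_0$ small.

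\textbf{Local error.} We use variation of constants for the exact solution,
\[
X(t_k+\tau)=\ee^{\tau A}X(t_k)\ee^{\tau A^*}+\int_0^\tau \ee^{(\tau-s)A}G(t_k+s,X(t_k+s))\ee^{(\tau-s)A^*}\dd s .
\]
We then write the splitting step as $\PhiA\bigl(\PhiA(X)+\int_0^\tau G\,\dd s\bigr)$. Next we expand $\PhiG_\tau(Z)=Z+\tau G(t_k,Z)+\frac{\tau^2}{2}G'G+O(\tau^3)$, and expand the exact integrand similarly around the midpoint. Smoothness of $G$ from Assumption~\ref{hp_split}\ref{ass:G} gives the $O(\tau^3)$ remainders with constants depending on $L$ and $B$.

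The leading discrepancy is the quadrature error of the midpoint-type rule applied to $f_k(s)=\ee^{(\tau-s)A}G(\cdot)\ee^{(\tau-s)A^*}$, namely $\int_0^\tau f_k(s)\dd s-\tau f_k(\tau/2)$. This term contains $f_k'(s)$, which involves $\ee^{(\tau-s)A}(AG+GA^*)\ee^{(\tau-s)A^*}$, and also $f_k''$, which involves $\mathcal A^2$. The first-derivative terms are bounded by Assumption~\ref{hp_split}\ref{ass:compatibility} by a constant $C$, with no smoothing needed. The terms involving $\mathcal A^2$ cannot be bounded locally uniformly in $m$.

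\textbf{Main obstacle.} The hard part is bounding these second-order commutator terms. Following \cite{EO1}, we split off the part of the local error of the form $\tau^2\mathcal A\,(\text{bounded})$, using compatibility for one factor of $\mathcal A$. We do not bound it locally. Instead we leave it inside the fan, where it is propagated by $\mathcal{S}_\tau^{n-k-1}$, whose linear part is $\ee^{(n-k-1)\tau\mathcal A}$ up to Lipschitz perturbations. Applying the smoothing estimate \eqref{smooth_pr} gives a contribution $\tau^2\cdot\frac{C_s}{(n-k-1)\tau}$ for $k<n-1$, plus $O(\tau^2)$ for the last terms. To justify replacing $\mathcal{S}_\tau^{j}$ by the linear semigroup acting on this term, we linearize and control the difference by the $O(\tau)$ Lipschitz coupling times $\tau^2$.

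Summing gives $\tau\sum_{j=1}^{n}\frac{1}{j}\le \tau(1+\log n)$, so this contribution is $\le C\tau^2(1+|\log\tau|)$, using $n\tau\le T$. The remaining local errors are genuinely $O(\tau^3)$ and contribute $O(\tau^2)$. Together these give the stated bound, with constants depending only on $\omega,C_s,L,B,T$.
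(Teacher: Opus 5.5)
You use the same ingredients as the paper: variation of constants, the Peano form of the midpoint rule, Assumption~\ref{hp_split}\ref{ass:compatibility} for one factor of $A$, \eqref{smooth_pr} for the other, and the harmonic sum. However, the step where you propagate local errors through the \emph{nonlinear} map $\mathcal{S}_\tau^{j}$, $j=n-k-1$, does not close as stated. The local defect $d_k=X(t_{k+1})-\mathcal{S}_\tau(X(t_k))$ is only $O(\tau^2)$ in norm uniformly in $m$.

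Set $\Delta_i=\mathcal{S}_\tau^i(X(t_{k+1}))-\mathcal{S}_\tau^{i}(\mathcal{S}_\tau(X(t_k)))$. Then $\Delta_j=\ee^{j\tau A}d_k\ee^{j\tau A^*}+R_k$ with $\lVert R_k\rVert\le C\tau\sum_{i<j}\lVert\Delta_i\rVert$, so the $O(\tau)$ Lipschitz coupling acts at \emph{every} one of the $j$ steps. If you bound $\lVert\Delta_i\rVert\le C\tau^2$ by stability, you get $\lVert R_k\rVert\le Cj\tau^3\le C\tau^2$. Then $\sum_k\lVert R_k\rVert\le Cn\tau^2=O(\tau)$, which is only first order.

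To rescue the fan, you must show that the propagated differences themselves inherit the smoothing: $\lVert\Delta_i\rVert\le C\tau^2/i+C\tau^3(1+\lvert\log\tau\rvert)$ for $i\ge1$. This follows from a discrete Gronwall argument on $\lVert\Delta_i\rVert\le\lVert\ee^{i\tau A}d_k\ee^{i\tau A^*}\rVert+C\tau\sum_{l<i}\lVert\Delta_l\rVert$. Only then is $\lVert R_k\rVert=O(\tau^3(1+\lvert\log\tau\rvert))$.

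The paper avoids this altogether. It writes a recursion for the global error, $E^n_{sp}=\ee^{\tau A}E^{n-1}_{sp}\ee^{\tau A^*}+\ee^{\tau A}H\ee^{\tau A^*}+e^n_{sp}$, where the nonlinear remainder $H$ is Lipschitz in $E^{n-1}_{sp}$ with an $O(\tau)$ constant by \eqref{LB}. It then unrolls only the linear part, as in \eqref{glob_err_formula_sp}. The local errors are propagated by the exact semigroup (the term $D_2$), where \eqref{smooth_pr} applies directly. The nonlinearity acts only on the $E^k_{sp}$, and a discrete Gronwall inequality absorbs it. That structure needs no linearization of $\mathcal{S}_\tau^j$.

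Second, your power counting is off by one power of $\tau$, and as written your last inequality is false. The piece you split off is not $\tau^2\mathcal{A}(\text{bounded})$. It is $\int_0^\tau K(s,\tau)\,\ee^{(\tau-s)A}(AW+WA^*)\ee^{(\tau-s)A^*}\dd s$, with the midpoint Peano kernel $K(s,\tau)=O(\tau^2)$ and $W$ bounded. Here $W$ is built from $AG+GA^*$, bounded by Assumption~\ref{hp_split}\ref{ass:compatibility}, and from $\frac{\dd G}{\dd s}$. So the piece is formally $\tau^3\mathcal{A}(\text{bounded})$.

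Propagated by $Z\mapsto\ee^{j\tau A}Z\ee^{j\tau A^*}$, \eqref{smooth_pr} then gives $C\tau^3/(j\tau)=C\tau^2/j$, which sums to $C\tau^2(1+\log n)$. Your $\tau^2\cdot C_s/(j\tau)$ instead sums to $C\tau(1+\log n)$, and $\tau(1+\log n)\le C\tau^2(1+\lvert\log\tau\rvert)$ does not hold.

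Likewise, the unpropagated term ($j=0$) is $O(\tau^2)$ only because the semigroup sits inside the integral, which gives $\int_0^\tau K(s,\tau)(\tau-s)^{-1}\dd s=O(\tau^2)$. A bare $\tau^2\mathcal{A}(\text{bounded})$ term would not be bounded uniformly in $m$.
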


\begin{proof}
	The solution of the matrix differential equation \eqref{eq} can be expressed by means of the variation-of-constants formula. Given the initial value $X(t_{k-1})=Z$, the solution at time $t_k = t_{k-1}+\tau$ with step size $\tau > 0$ is 
	\begin{align*}
	X(t_k) =\ee^{\tau A} Z\ee^{\tau A^*} + \int_0^\tau\ee^{(\tau-s)A} G(t_{k-1}+s,X(t_{k-1}+s))\ee^{(\tau-s)A^*}\dd s.
	\end{align*}
	The exact solution of the first full-rank subproblem \eqref{sub1} at $t_{k-1}+\tau/2$ with initial value $X_1(t_{k-1}) = Z$ is given by
	\begin{equation} \label{exact_1} 
	\Phi^A_{\frac{\tau}{2}}(Z) = X_1(t_{k-1}+\tau/2) =\ee^{\frac{\tau}{2} A} Z \ee^{\frac{\tau}{2} A^*},
	\end{equation} 
    whereas the exact solution of the second full-rank subproblem \eqref{sub2} with initial value $X_2(t_{k-1})=Z$ can be expressed as 
	\begin{equation} \label{exact_2} 
	\Phi^G_{\tau}(Z) =  Z+\tau G(t_{k-1},Z)+ \frac{\tau^2}{2} \ddot{X}_2(t_{k-1}) + \int_{0}^{\tau} (\tau-s)^2 \dddot{X}_2 (t_{k-1}+s) \dd s.
	\end{equation}
	Considering that \eqref{sub1} and \eqref{sub3} are analogous, and combing the previous expressions, we have that the full-rank Strang splitting solution    
	\begin{equation} \label{exact_full}
	\mathcal{S}_{\tau} (Z) = \ee^{\tau A} Z \ee^{\tau A^*} + \tau \ee^{\tau A} G(t_{k-1},X) \ee^{\tau A^*} + \frac{\tau^2}{2} \ee^{\frac{\tau}{2} A} \frac{\dd{G}}{\dd t}(t_{k-1}, X) \ee^{\frac{\tau}{2} A^*}+ \mathcal{O}(\tau^3)
	\end{equation}
	where $X = \ee^{\frac{\tau}{2} A} Z\ee^{\frac{\tau}{2} A^*}$.
Let $f(s)=\ee^{(\tau-s)A} G(t_{k-1}+s,X(t_{k-1}+s))\ee^{(\tau-s)A^*}$.
Exploiting the Peano form of the error of the midpoint quadrature rule, we can express

\begin{align}
&\int_0^{\tau}  \ee^{(\tau-s)A} G(t_{k-1}+s,X(t_{k-1}+s))\ee^{(\tau-s)A^*} \dd s = \\ \nonumber
& \tau \ee^{\frac{\tau}{2}A}G(t_{k-1}+\tfrac{\tau}{2},X(t_{k-1}+\tfrac{\tau}{2}))\ee^{\frac{\tau}{2}A^*}+\frac{1}{2}\int_0^\tau K(s,t) \ddot{f}(s) \dd s
\end{align}
with kernel $K(s,\tau) = s^2/4$ for $s<\tau/2$ and $K(s,\tau) = (\tau-s)^2/2$ for $s>\tau/2$. Consider that 

\begin{align}
&G(t_{k-1}+\tfrac{\tau}{2},X(t_{k-1}+\tfrac{\tau}{2}))-G(t_{k-1}, X) =\\ \nonumber
& \frac{\tau}{2} \partial_t G(t_{k-1}, X)+ \frac{\tau}{2}\partial_X G(t_{k-1}, X) \left( X(t_{k-1} +\tfrac{\tau}{2}) -X \right)+ \mathcal{O}(\tau^2)  = \\ \nonumber
&\frac{\tau}{2} \partial_t G(t_{k-1}, X)+ \frac{\tau}{2}\partial_X G(t_{k-1}, X) G(t_{k-1},X(t_{k-1})) + \\ \nonumber
& \partial_X G(t_{k-1}, X)  \int_0^{\frac{\tau}{2}} \int_0^s  \dot{\tilde{f}} \dd \xi \dd s
\end{align}
where \begin{equation}\label{ftilda}
\tilde{f}(s) = \ee^{(\frac{\tau}{2}-s)A} G(t_{k-1}+s,X(t_{k-1}+s))\ee^{(\frac{\tau}{2}-s)A^*}.
\end{equation}

So, we can write 
\begin{align*}
	X(t_k) & =\ee^{\tau A} Z\ee^{\tau A^*} + \tau \ee^{\frac{\tau}{2}A}G(t_{k-1}+\tfrac{\tau}{2},X(t_{k-1}+\tfrac{\tau}{2}))\ee^{\frac{\tau}{2}A^*}	+\frac{1}{2}\int_0^\tau K(s,t) \ddot{f}(s) \dd s
	\end{align*}
The local error at $t_k$ is given by

\begin{align*}
e^k_{sp} &= X(t_{k}) - \mathcal{S}_{\tau}(Z) \\
&= \tau^2 \ee^{\frac{\tau}{2} A} 
\left(  
    G\left(t_{k-1} + \tfrac{\tau}{2}, X(t_{k-1} + \tfrac{\tau}{2})\right) 
    - G(t_{k-1}, X) 
\right)  
\ee^{\frac{\tau}{2} A^*} \\
&\quad + \frac{\tau^2}{2} \ee^{\frac{\tau}{2} A} 
\frac{\dd G}{\dd t}(t_{k-1}, X) 
\ee^{\frac{\tau}{2} A^*}
+ \frac{1}{2} \int_0^\tau K(s,t) \ddot{f}(s) \dd s 
+ \mathcal{O}(\tau^3) \\
&= \frac{\tau^2}{2} \ee^{\frac{\tau}{2} A} 
\left( 
    \partial_t G(t_{k-1}, X) 
    + \partial_X G(t_{k-1}, X) G(t_{k-1}, X(t_{k-1})) 
\right) 
\ee^{\frac{\tau}{2} A^*} \\
&\quad + \frac{\tau}{2} \ee^{\frac{\tau}{2} A} 
\left( 
    \partial_X G(t_{k-1}, X) 
    \int_0^{\frac{\tau}{2}} \int_0^s \dot{\tilde{f}}(\xi) \dd \xi \dd s 
\right) 
\ee^{\frac{\tau}{2} A^*} \\
&\quad - \frac{\tau^2}{2} \ee^{\frac{\tau}{2} A} 
\frac{\dd G}{\dd t}(t_{k-1}, X) 
\ee^{\frac{\tau}{2} A^*}
+ \frac{1}{2} \int_0^\tau K(s,t) \ddot{f}(s) \dd s 
+ \mathcal{O}(\tau^3) \\
&= \frac{\tau^2}{2} \ee^{\frac{\tau}{2} A} 
\left( 
    \partial_X G(t_{k-1}, X) 
    \left( 
        G(t_{k-1}, X(t_{k-1})) - G(t_{k-1}, X) 
    \right) 
\right) 
\ee^{\frac{\tau}{2} A^*} \\
&\quad + \frac{\tau}{2} \ee^{\frac{\tau}{2} A} 
\left( 
    \partial_X G(t_{k-1}, X) 
    \int_0^{\frac{\tau}{2}} \int_0^s \dot{\tilde{f}}(\xi) \dd \xi \dd s 
\right) 
\ee^{\frac{\tau}{2} A^*} \\
&\quad + \frac{1}{2} \int_0^\tau K(s,t) \ddot{f}(s) \dd s 
+ \mathcal{O}(\tau^3)
\end{align*}

For conciseness, we omit explicit dependences of $G$.
 Using the commutativity of a matrix with its exponential, we have that 
the firts and the second derivatives of $\tilde{f}$ \eqref{ftilda} are given by
	\[ 
	\dot{\tilde{f}}(s) = -\ee^{(\frac{\tau}{2}-s)A} \left(AG+GA{{{^*}}} - \frac{\dd{G}}{\dd s}\right) \ee^{(\frac{\tau}{2}-s)A{{{^*}}}}
	\] 

\begin{align*}
\ddot{\tilde{f}}(s) = 
-\ee^{\left(\frac{\tau}{2} - s\right)A} 
\bigg[
& A \left(AG + GA^* \right) + \left(AG + GA^* \right)A^* \\
& - 2\left(A \frac{\dd G}{\dd s} + \frac{\dd G}{\dd s}A^*\right) 
+ \frac{\dd^2 G}{\dd s^2}
\bigg] 
\ee^{\left(\frac{\tau}{2} - s\right)A^*}
\end{align*}
%
\newpage
Exploiting the assumptions \eqref{hp_split} and recalling that the nonlinearity $G$ is assumed to be smooth in a neighborhood of the exact solution, we have the following form for the local error 
\begin{align}
\label{loc_err_formula_sp}
e^k_{sp} & =     \nonumber
-\int_0^{\tau } K(s,\tau) \ee^{\left(\tau - s\right)A} 
\bigg[
    A \left(AG + GA^* \right) 
    + \left(AG + GA^* \right)A^* \\ \nonumber
& \quad 
    - 2\left(A \frac{\dd G}{\dd s} + \frac{\dd G}{\dd s}A^* \right)
\bigg] 
\ee^{\left(\tau - s\right)A^*} \dd s 
+ O(\tau^3) \\  \nonumber
& =  
-\int_0^{\frac{\tau}{2} } \frac{s^2}{2} \ee^{\left(\tau - s\right)A} 
\bigg[
    A \left(AG + GA^* \right) 
    + \left(AG + GA^* \right)A^* \\  \nonumber
& \quad 
    - 2\left(A \frac{\dd G}{\dd s} + \frac{\dd G}{\dd s}A^* \right)
\bigg] 
\ee^{\left(\frac{\tau}{2} - s\right)A^*} \dd s \\  \nonumber
& \quad 
-\int_{\frac{\tau}{2}}^{\tau } \frac{(\tau-s)^2}{2} 
\ee^{\left(\frac{\tau}{2} - s\right)A} 
\bigg[
    A \left(AG + GA^* \right) 
    + \left(AG + GA^* \right)A^* \\  
& \quad 
    - 2\left(A \frac{\dd G}{\dd s} + \frac{\dd G}{\dd s}A^*\right)
\bigg] 
\ee^{\left(\tau - s\right)A^*} \dd s 
+ O(\tau^3).
\end{align}

	Due to the presence of the matrix $A$, we do not bound the local error \eqref{loc_err_formula_sp} directly. Instead, we solve the error recursion first.
	Recalling that $X^{n-1}~=~\mathcal{S}_{\tau}^{n-1}(X^0)$, we write the global error of the Strang splitting as 
	\begin{align*} 
	E^n_{sp} = \mathcal{S}_{\tau} (X(t_{n-1}))-\mathcal{S}_{\tau} (X^{n-1})+e_{sp}^n,
	\end{align*}
	where the first two terms represent the propagation of $E^{n-1}_{sp}$ by the numerical method $\mathcal{S}_{\tau}$, which is nonlinear.
	Making use of formula \eqref{exact_full}, we write  
	\begin{align} \label{strang_prop} 
	\mathcal{S}_{\tau} (X(t_{n-1}))-\mathcal{S}_{\tau} (X^{n-1}) = \ee^{\tau A} E^{n-1}_{sp}\ee^{\tau A^*} + \ee^{\tau A} H(X(t_{n-1}),X^{n-1})\ee^{\tau A^*},
	\end{align}
	where 
	\begin{align*} 
	H(X(t_{n-1}),X^{n-1})  = & \,\tau \left[G(t_{n-1},X(t_{n-1}))-G(t_{n-1},X^{n-1}) \right] \\
	& + \int_{0}^{\tau} (\tau-s) \left[ \ddot{X}_2(t_{n-1}+s) - \ddot{\widetilde{X}}_2 (t_{n-1}+s) \right] \dd s.
	\end{align*}
	
	The functions $X_2$ and $\widetilde{X}_2$ are the solutions of the second subproblem \eqref{sub2} with initial values $X(t_{n-1})$ and $X^{n-1}$, respectively. 
	Starting from $X(t_0)$ and $X^0$ and using  \eqref{strang_prop} for their propagation by the Strang splitting method, we rewrite the global error as
	\begin{align} \label{glob_err_formula_sp}
	\begin{split}
	E^n_{sp} = & \, \ee^{n \tau A} E^0_{sp} \ee^{n\tau A^*} + \underbrace{\sum_{k = 0}^{n-1} \ee^{(n-k)\tau A} H(X(t_{k}),X^{k})\ee^{(n-k)\tau A^*}}_{=: D_1} \\
	& + \underbrace{\sum_{k=1}^{n}\ee^{(n-k)\tau A} e^k_{sp} \ee^{(n-k)\tau A^*}}_{=: D_2}.
	\end{split}
	\end{align}
	By the choice of the initial value $X(t_0) = X^0$ we have $ \lVert E^0_{sp} \rVert = 0$. Since the expression $H$ mainly consists of the nonlinear function $G$, which by Assumption \ref{hp_split}\ref{ass:G} is Lipschitz continuous, and of its derivative $\ddot{X}_2$, which is assumed to be continuous, we have the bound 
	\begin{align*}
	\lVert H(X(t_k),X^k) \rVert \leq C (\tau \lVert E^k_{sp} \rVert + \tau^2). 
	\end{align*}
	Hence, property \eqref{bound} yields the following bound for the second term in the representation of the global error \eqref{glob_err_formula_sp}: 
	\begin{equation} \label{final_boundD1}
	\lVert D_1 \rVert \leq C \sum_{k = 0}^{n-1} \ee^{(n-k)\tau\omega} \left(\tau \lVert E^k_{sp} \rVert + \tau^2 \right) \leq C \tau  \left(\sum_{k = 0}^{n-1} \lVert E^k_{sp} \rVert + 1\right). 
	\end{equation}
	
Now, we have to bound the term $D_2$ of \eqref{glob_err_formula_sp} . 
Considering the expression of the local error $e_{sp}^k$ in formula \eqref{loc_err_formula_sp}, using  parabolic smoothing \ref{hp_split}\ref{ass:A}, the smoothness of $G$ \ref{hp_split}\ref{ass:G} and the approximation of  the n-th partial sum of the harmonic series by the logarithmic function, we start bounding the term
\begin{align*}
& \sum_{k=0}^{n-1} \left \| \ee^{(n-k)\tau A}  \left( \int_0^{\frac{\tau}{2}} {s^2} \ee^{(\tau-s)A}\left(A \frac{\dd G}{\dd s} + \frac{\dd G}{\dd s}A^*\right) \ee^{(\tau-s)A^*}  \right) \ee^{(n-k)\tau A^*}\right \| \leq \\
&  C   \sum_{k=1}^{n} \ee^{\tau \omega} \dfrac{1}{k \tau} \left( \int_0^{\frac{\tau}{2}} {s^2} \ee^{-s \omega}   {\dd s}  \right) \leq 
 C \ee^{(T-t_0) \omega} \tau^2 \log(n) \leq 
 C  \tau^2\vert \log(\tau) \vert 
\end{align*}

with $C$ independent by $\tau$ and $n$.


Using \eqref{smooth_pr} and \ref{hp_split}\ref{ass:compatibility}, we are able to bound the following term involved in $D_2$,
\begin{align*}
& \left \| \int_0^{\frac{\tau}{2} } \frac{s^2}{2} \ee^{\left(\tau - s\right)A} 
\bigg[
A \left(AG + GA^* \right) + \left(AG + GA^* \right)A^*
\bigg] 
\ee^{\left(\frac{\tau}{2} - s\right)A^*} \dd s\right\| \leq   C \int_0^{\frac{\tau}{2} }  \dfrac{s^2}{\tau-s} \dd s \leq C \tau^2
\end{align*}
The derivation of the bounds of the terms of $D_2$, involving terms of $e_{sp}^k$ with kernel $(\tau -s)^2$ is analogous. Finally,
\begin{align*}
& \sum_{k=0}^{n-1} \bigg\| 
    \ee^{(n-k)\tau A} 
    \Bigg( 
        \int_0^{\frac{\tau}{2}} \frac{s^2}{2}\, 
        \ee^{(\tau - s) A} 
        \big[
            A(AG + GA^*)  + (AG + GA^*) A^*
        \big] 
        \ee^{\left(\frac{\tau}{2} - s\right) A^*} 
        \dd s 
    \Bigg) 
    \ee^{(n-k)\tau A^*} 
\bigg\| \\
& \leq C \tau^2 \sum_{k=0}^{n-1} \ee^{(n-k)\tau \omega} \leq C \tau^2
\end{align*}

In summary, we achieve the following bound for $D_2$ in \eqref{glob_err_formula_sp}
\begin{align}\label{final_boundD2}
D_2   \leq C \tau^2(1+\log(\tau)) + e_{sp}^n
\end{align}
with $\| e_{sp}^n\| \leq C \tau^2$ directly. The constant $C$ only depends on $\omega$, $C_s$, $T$, $L$ and $B$.\\
Now, collecting the previous estimates  \eqref{final_boundD1} and \eqref{final_boundD2} yields the error bound
	\[ 
	\lVert E^n_{sp} \rVert \leq  C \tau \sum_{k = 0}^{n-1} \lVert E^k_{sp} \rVert + C \tau^2 \left( 1 +\vert \log \tau \vert  \right) . 
	\]
	The global error bound follows now from a discrete Gronwall inequality, see, e.g., \cite{Gronwall86}, 
	
	\[ 
	\lVert E^n_{sp} \rVert \leq C_2 \tau^2 \left( 1 +\vert \log \tau \vert  \right) \ee^{C_1 n \tau} = C \tau^2 \left( 1 +\vert \log \tau \vert  \right)   
	\]
	
with $C$, a suitable constant independent by $\tau$ and $n$.

\end{proof}

\subsubsection{The low-rank Strang splitting} \label{err_lr}
In this section, we compare the full-rank Strang splitting \eqref{strang} and the low-rank Strang splitting \eqref{approx_strang}.
We start with the following preliminary result. 

\begin{lemma} \label{locerr}
	Under Assumption \ref{hp_split}, the following bound holds uniformly for each $n \geq 1$ satisfying $t_0 \leq t_0 + n\tau \leq T$ 
	\begin{align*}
	\lVert  \mathcal{S}_{\tau} (Y^{n-1}) - \widetilde{\mathcal{S}}_{\tau} \rVert \leq C \ \tau (\tau^2+\varepsilon),  
	\end{align*}
	as long as $\lVert Y^{n-1}-Y(t_{n-1})\rVert \leq \gamma$ for given $\gamma > 0$, see \eqref{LB}. The constant $C$ depend on $\omega$, $L$, $B$, $T$ and the bound on the third derivatives of the exact solution, but are independent of $\tau$ and $n$.
\end{lemma}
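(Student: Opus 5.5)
The plan is to reduce the statement, which is clearly meant as $\lVert \mathcal{S}_\tau(Y^{n-1}) - \widetilde{\mathcal{S}}_\tau(Y^{n-1})\rVert$, to a local error estimate for the BUG2 integrator applied to the non-stiff subproblem \eqref{sub2}. The outer half-steps are the same exact flow in both schemes, so the first move is to peel them off. Setting $Z := \PhiA(Y^{n-1}) = \ee^{\frac{\tau}{2}A}Y^{n-1}\ee^{\frac{\tau}{2}A^*}$, we have
\[
\mathcal{S}_\tau(Y^{n-1}) - \widetilde{\mathcal{S}}_\tau(Y^{n-1}) = \ee^{\frac{\tau}{2}A}\bigl(\PhiG(Z) - \phiG(Z)\bigr)\ee^{\frac{\tau}{2}A^*}.
\]
By \eqref{bound} this is bounded by $\ee^{\omega\tau/2}\lVert \PhiG(Z)-\phiG(Z)\rVert$. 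Note that $Z\in\M$, because the linear flow preserves rank. The stiff matrix $A$ therefore drops out entirely, and only the non-stiff dynamics $\dot X_2 = G(t,X_2)$ started at the rank-$r$ matrix $Z$ remain.

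Next I would invoke the local error analysis of the midpoint BUG integrator from \cite{BUG2}. For $\dot X_2 = G(t,X_2)$ with $X_2(t_{n-1})=Z\in\M$, it gives
\[
\lVert \PhiG(Z)-\phiG(Z)\rVert \le C\bigl(\tau^3 + \tau\varepsilon\bigr).
\]
This holds provided $G$ is Lipschitz and bounded with constants $L,B$ near the trajectory, has bounded derivatives up to the order needed for the midpoint expansion (this is where the third derivatives of the solution enter), and its normal component along the trajectory is at most $\varepsilon$. The Lipschitz and boundedness requirements come from Assumption~\ref{hp_split}\ref{ass:G}. The constant depends only on $L$, $B$ and the derivative bounds, and crucially not on $A$, since BUG2 sees only $G$. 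Multiplying by $\ee^{\omega\tau/2}\le \ee^{|\omega|T}$ then gives $C\tau(\tau^2+\varepsilon)$.

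The main obstacle is checking the hypotheses of the BUG2 estimate at the point $Z$ and along the short trajectories generated from it, in particular the $\varepsilon$-smallness of the normal component. Assumption~\ref{hp_split}\ref{ass:MR} gives this only along the reference low-rank path $Y(t)$, not at $Z$ or on the BUG2 substeps. I would handle it by writing, for $W$ near $Y(t)$,
\[
(I-P(W))G(t,W) = (I-P(W))\bigl(G(t,W)-G(t,Y(t))\bigr) + (I-P(W))M(t,Y(t)) + (I-P(W))R(t,Y(t)).
\]
The first term is $O(L\lVert W - Y(t)\rVert)$, the third is at most $\varepsilon$, and the middle one is controlled by curvature bounds of $\M$ as in \cite{KL}, giving $O(\lVert W-Y(t)\rVert)$ with a constant depending on the smallest singular value. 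It remains to bound $\lVert W - Y(t)\rVert$. Here $\lVert Z - Y(t_{n-1}+\tau/2)\rVert$ is at most $\gamma$ plus $O(\tau)$, using \eqref{bound} together with the mild-formulation integral for $Y$, whose integrand is bounded by $B$. All BUG2 substages stay within $O(\tau)$ of $Z$. The hypothesis $\lVert Y^{n-1}-Y(t_{n-1})\rVert\le\gamma$, with $\gamma$ and $\tau_0$ chosen small as in the remarks preceding Section~\ref{err_spl}, keeps everything inside the neighborhood $\mathcal U$ where \eqref{LB} holds. I expect the resulting non-tangential contribution to be absorbed into $\varepsilon$ up to constants, exactly in the spirit of \cite{OPW}.
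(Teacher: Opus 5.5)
Your first two paragraphs are exactly the paper's proof. You peel off the two exact half-steps with \eqref{bound} and cite the local error bound of \cite{BUG2} for the non-stiff subproblem. You are even slightly more careful than the paper, which writes the BUG2 error at $Y^{n-1}$ rather than at $Z=\PhiA(Y^{n-1})\in\M$. The paper stops there and never checks the hypotheses of \cite{BUG2} at $Z$. You correctly observe that Assumption~\ref{hp_split}\ref{ass:MR}, stated only along the curve $Y(t)$, is not what \cite{BUG2} requires.

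Your repair in the last paragraph, however, does not work, and this is a genuine gap. At the rank-$r$ points $W$ visited by BUG2, your decomposition gives $\lVert (I-P(W))G(t,W)\rVert \le \varepsilon + c\,(L+\rho^{-1}\lVert M\rVert)\,\lVert W-Y(t)\rVert$, where $\rho$ is the smallest nonzero singular value. Here $\lVert W-Y(t)\rVert$ is of size $\gamma+\tau$. Even for $\gamma=0$, the offset $Z-Y(t_{n-1}+\tau/2)$ is the projected $G$-integral over half a step, hence genuinely of order $\tau$, and the substeps add another $O(\tau)$.

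Feeding this effective normal component into the BUG2 estimate gives $C\tau(\tau^2+\varepsilon+\gamma+\tau)$, not $C\tau(\tau^2+\varepsilon)$. Each extra term is fatal:
\begin{itemize}
\item The $\tau\cdot\tau$ term is a local error of order $\tau^2$, so after the summation in Proposition~\ref{globerr} only first order survives.
\item The $\tau\gamma$ term accumulates to $O(\gamma)$, and $\gamma$ is a fixed radius.
\item The factor $\rho^{-1}$ is exactly the small-singular-value dependence that BUG2 is designed to avoid. It is not among the admissible constants, and the curvature bound of \cite{KL} moreover needs $\lVert W-Y(t)\rVert$ small relative to $\rho$.
\end{itemize}
So these terms cannot be absorbed into $\varepsilon$. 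Comparing $W$ with $Y(t)$ at different times within one step is also problematic in itself, since $\lVert Y(t)-Y(s)\rVert$ is not $O(\lvert t-s\rvert)$ uniformly in the stiffness.

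What is missing is a hypothesis rather than an estimate. The normal-component bound must be assumed in the form used in \cite{BUG2} and \cite{CKL}: $\lVert (I-P(W))G(t,W)\rVert\le\varepsilon$ for all $W\in\M$ in a fixed neighborhood of the low-rank trajectory. That neighborhood must be large enough to contain $Z$ and the BUG2 stages, which is the role of $\lVert Y^{n-1}-Y(t_{n-1})\rVert\le\gamma$ and $\tau\le\tau_0$. With Assumption~\ref{hp_split}\ref{ass:MR} read this way, your first two paragraphs are a complete proof, matching the paper's.
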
	

\begin{proof}
    For estimating  $(\PhiG - \phiG) (Y^{n-1})$ we refer to to the local error bound of  \cite{BUG2}, so that we have 
    \begin{align*}
	& \lVert\mathcal{S}_{\tau} (Y^{n-1}) - \widetilde{\mathcal{S}}_{\tau}(Y^{n-1}) \rVert  =  \lVert  (\PhiA \circ (\PhiG - \phiG) \circ \PhiA) (Y^{n-1}) \rVert  \leq \ee^{\frac{\tau}{2} \omega} \lVert (\PhiG - \phiG) (Y^{n-1}) \rVert \leq \\ & C \tau (\tau^2+\varepsilon)
	\end{align*}
	for a suitable constant $C$ independent by $\tau$ and $n$.	
\end{proof}
With this local error estimate, we can now  prove the the following results.

\begin{prop}\label{globerr}
	Under Assumption \ref{hp_split}, $E^n_{lr}= \mathcal{S}_{\tau}^n (Y^0) - \widetilde{\mathcal{S}}_{\tau}^n (Y^0)$ is uniformly bounded on $t_0 \leq t_0+n\tau \leq T$ as 
	\begin{align*}
	\lVert E^n_{lr} \rVert \leq C_1 \varepsilon \tau + C_2 \tau^2 , 
	\end{align*}
	where the constants $C_1$ and $C_2$ depend on $\omega$, $L$, $B$ and $T$, but are independent of $\tau$ and $n$.  
\end{prop}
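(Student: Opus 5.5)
The plan is a Lady Windermere's fan argument: telescope the global error into propagated local errors, control propagation with a stability bound for the full-rank Strang map $\mathcal{S}_\tau$, and close with a discrete Gronwall argument. The local errors come from Lemma~\ref{locerr}. To reach the factor $\tau$ in front of $\varepsilon$, Lemma~\ref{locerr} has to be sharpened in its $\varepsilon$-part.

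\textbf{Step 1 (decomposition and stability).} With $Y^k=\widetilde{\mathcal{S}}_\tau^k(Y^0)$, write
\[
E^n_{lr}=\sum_{k=1}^{n}\Bigl(\mathcal{S}_\tau^{\,n-k}\bigl(\mathcal{S}_\tau(Y^{k-1})\bigr)-\mathcal{S}_\tau^{\,n-k}\bigl(\widetilde{\mathcal{S}}_\tau(Y^{k-1})\bigr)\Bigr).
\]
I will use representation \eqref{strang_prop} with the same expression $H$ as in the proof of Proposition~\ref{spl_err}. Together with \eqref{bound} and the Lipschitz bound \eqref{LB}, it gives
\[
\|\mathcal{S}_\tau(Z_1)-\mathcal{S}_\tau(Z_2)\|\le \ee^{(\omega+CL)\tau}\|Z_1-Z_2\|
\]
for $Z_1,Z_2$ in the neighbourhood $\mathcal{U}$. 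Consequently, the $k$-th term of the sum is bounded by $\ee^{(\omega+CL)(T-t_0)}\,\|\mathcal{S}_\tau(Y^{k-1})-\widetilde{\mathcal{S}}_\tau(Y^{k-1})\|$. Equivalently, the recursion reads $\|E^n_{lr}\|\le \ee^{(\omega+CL)\tau}\|E^{n-1}_{lr}\|+\ell_n$, where $\ell_n$ is the local defect at $Y^{n-1}$.

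\textbf{Step 2 (sharpened local defect).} Summing the bound $\ell_n\le C\tau(\tau^2+\varepsilon)$ from Lemma~\ref{locerr} over $n\le (T-t_0)/\tau$ steps yields only $C\tau^2+C\varepsilon$. To obtain $C_1\varepsilon\tau$, I will show that the $\varepsilon$-dependent part of $(\PhiG-\phiG)(Z)$, with $Z=\PhiA(Y^{n-1})$, is $\mathcal{O}(\tau^2\varepsilon)$, i.e. $\ell_n\le C(\tau^3+\tau^2\varepsilon)$.

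For this I will use the structure of the midpoint BUG scheme. The Galerkin basis $\overline U$ contains $\widehat U_{1/2}$ and $\tau F(t_{1/2},\widehat Y_{1/2})\widehat V_{1/2}$, and $\overline V$ is built analogously. Hence the first-order increment $\tau\, G(t_{1/2},\widehat Y_{1/2})$ is reproduced by $\overline U\,\overline U^*(\cdot)\overline V\,\overline V^*$ except for the doubly-normal block $(I-\overline U\,\overline U^*)\,G\,(I-\overline V\,\overline V^*)$.

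I will split $G=M+R$ as in Assumption~\ref{hp_split}\ref{ass:MR}, evaluated along $Y(t)$. Then I will try to show two things:
\begin{itemize}
\item the $M$-part of the doubly-normal block vanishes up to $\mathcal{O}(\tau)$, because $M\in\mathcal{T}_{Y}\mathcal{M}$ and $\widehat U_{1/2},\widehat V_{1/2}$ are $\mathcal{O}(\tau)$-close to the range and corange of $Y$;
\item the $R$-part enters the rank-$r$ truncation only through the comparison between $\Phi^G_\tau$ and the rank-$r$ projected flow. At leading order this comparison is the same $\tau R$ term for both maps, so the two leading contributions cancel and only the $\tau\cdot\mathcal{O}(\tau)\varepsilon$ variation survives.
\end{itemize}
Throughout, $Y^{n-1}$ must stay within $\gamma$ of $Y(t_{n-1})$. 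This follows inductively from the final bound once $\tau_0$ is chosen small enough, as described at the beginning of Section~\ref{proofs}.

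\textbf{Step 3 (conclusion).} Inserting $\ell_k\le C(\tau^3+\tau^2\varepsilon)$ into the sum of Step~1 and applying the discrete Gronwall inequality of \cite{Gronwall86} gives
\[
\|E^n_{lr}\|\le \ee^{(\omega+CL)(T-t_0)}\sum_{k=1}^n C(\tau^3+\tau^2\varepsilon)\le C_2\tau^2+C_1\varepsilon\tau.
\]
The constants depend only on $\omega$, $L$, $B$ and $T$.

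The \textbf{main obstacle} is Step~2, namely proving that the $\varepsilon$-contribution to the local defect is of order $\tau^2\varepsilon$ rather than $\tau\varepsilon$. The first thing I would try is a first-order Taylor expansion in $\tau$ of both $\Phi^G_\tau(Z)$ and $\phiG(Z)$ around $Z$, checking that their $\tau$-coefficients differ by a quantity of order $\tau\varepsilon$. If this cancellation fails, the method yields only the weaker bound $C_1\varepsilon+C_2\tau^2$.
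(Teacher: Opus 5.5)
Your Steps~1 and~3 are exactly the paper's proof. It uses stability of $\mathcal{S}_\tau$ from \eqref{bound} and \eqref{LB}, the recursion $\|E^n_{lr}\|\le \ee^{(L+\omega)\tau}\|E^{n-1}_{lr}\|+C\tau(\tau^2+\varepsilon)$ fed by Lemma~\ref{locerr}, and a discrete Gronwall inequality. You are right that this recursion only yields $C_1\varepsilon+C_2\tau^2$, because defects of size $\tau\varepsilon$ accumulate over $\mathcal{O}(1/\tau)$ steps. The paper's closing sentence ``applying a discrete Gronwall inequality yields the stated result'' passes over exactly this point. The gap is your Step~2, and it cannot be closed. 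The cancellation in your second bullet does not take place. $E^n_{lr}$ compares with the \emph{full-rank} splitting $\mathcal{S}_\tau$, which keeps the normal increment $\tau R$, whereas $\widetilde{\mathcal{S}}_\tau(Y^{n-1})$ has rank at most $r$ and cannot reproduce it. For \emph{any} rank-$r$ scheme, Eckart--Young gives $\ell_n\ge\sigma_{r+1}\bigl(\mathcal{S}_\tau(Y^{n-1})\bigr)$, and this singular value is generically of size $\tau\|R\|$.

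Here is a concrete counterexample. Take $m\ge 2$, $r=1$, $A=0$, $G(t,X)\equiv Q:=e_1e_1^*+\varepsilon\, e_2e_2^*$ with $0<\varepsilon\le 1$, and $Y^0=e_1e_1^*$. Assumption~\ref{hp_split} holds with $\omega=0$, any $C_s$, $L=0$ and $B=\|Q\|$, and (d) holds trivially. The projected solution is $Y(t)=(1+t-t_0)e_1e_1^*$. Then $G(t,Y(t))=M+R$ with $M=e_1e_1^*\in\mathcal{T}_{Y(t)}\M$ and $R=\varepsilon\, e_2e_2^*$, so $\|R\|=\varepsilon$. Since $A=0$, $\mathcal{S}_\tau$ is the exact flow, so $\mathcal{S}_\tau^n(Y^0)=(1+t_n-t_0)e_1e_1^*+(t_n-t_0)\varepsilon\, e_2e_2^*$. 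BUG2, followed by truncation to rank $1$, returns $(1+t_n-t_0)e_1e_1^*$. Hence $\ell_n=\tau\varepsilon$ at every step, which contradicts your target $\ell_n\le C(\tau^3+\tau^2\varepsilon)$, and $\|E^n_{lr}\|=(t_n-t_0)\varepsilon$. Taking $t_n=T$ and letting $\tau\to0$ contradicts the bound $C_1\varepsilon\tau+C_2\tau^2$. So the proposition is false as stated, and so is the $c_2\tau\varepsilon$ term in Theorem~\ref{main_th}. What your Steps~1 and~3 do prove, with Lemma~\ref{locerr} unchanged, is $\|E^n_{lr}\|\le C_1\varepsilon+C_2\tau^2$. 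That is the form I would expect from the known error bounds for the low-rank Lie--Trotter splitting and for BUG2, and your stated fallback is the correct conclusion.
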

\begin{proof}
Let $\widehat{Y}$, $\widetilde{Y} \in \M$. Employing bound \eqref{bound} and the Lip\-schitz continuity \eqref{LB} of the nonlinearity $G$, we obtain 
\begin{align*}
\begin{split}
\lVert \mathcal{S}_{\tau} (\widehat{Y}) - {\mathcal{S}}_{\tau}(\widetilde{Y}) \rVert & \leq \ee^{(L+\omega)\frac{\tau}{2}} \lVert \widehat{Y} - \widetilde{Y} \rVert,
\end{split}
\end{align*}
which shows stability of the splitting method $\mathcal{S}_{\tau}$. 
%
%

By induction:
\begin{align*}
\| E^n_{lr} \| &= \| \mathcal{S}_\tau^n(Y^0) - \tilde{\mathcal{S}}_\tau^n(Y^0) \|\leq \| \mathcal{S}_\tau^n(Y^0) - \mathcal{S}_\tau(\tilde{\mathcal{S}}_\tau^{n-1}(Y^0)) \| + \| \mathcal{S}_\tau(\tilde{\mathcal{S}}_\tau^{n-1}(Y^0)) - \tilde{\mathcal{S}}_\tau^n(Y^0) \| \leq \\
& \ee^{(L+\omega)\tau} \| E^{n-1}_{lr} \| + C \tau (\tau^2 + \varepsilon).
\end{align*}
Applying a discrete Gronwall inequality yields the stated result.
\end{proof}

\subsubsection{Proof of Theorem \ref{main_th}} \label{final_proof}
Finally, we are in a position to combine the results of the previous sections and prove the main result of this paper. 
\begin{proof}[Proof of Theorem \ref{main_th}]
What remains is to give a bound for the propagation $E^n_{\delta}$ of the initial error $\lVert X^0 - Y^0 \rVert$ by $\mathcal{S}_{\tau}$.
Due to stability of $\mathcal{S}_{\tau}$, we have the following bound
\[ \lVert \mathcal{S}_{\tau}^n (X^0) - \mathcal{S}_{\tau}^n (Y^0) \rVert \leq \ee^{(L+\omega)(T-t_0)} \lVert X^0-Y^0 \rVert. 
\] 
We can finally collect
\begin{align*}
\| X(t_0+n \tau) - {\widetilde{S}}^n_\tau \| & \leq \| E^n_{sp} \| + \| E^n_{lr} \| + \| E^n_{\delta} \| \leq  C_1 \tau^2 (1+ \vert \log{\tau} \vert ) + c_2 \varepsilon \tau + C_3 \tau^2 + c_1 \delta \leq \\
 &  c_0 \tau^2 (1+ \vert \log{\tau} \vert ) + c_1 \delta + c_2 \varepsilon \tau   
\end{align*}
where the constants comes from the results of the previous sections and, in particular, $c_0$ cointains $C_1$ and $C_3$.
\end{proof}

\subsection*{Implementation and further remarks}

To obtain a practical and fully implementable numerical method, the differential systems associated with the BUG2 integrator-presented in their continuous formulation in Section~3-must be discretized in time. 
We employ the explicit Heun method  for the numerical integration of the matrix differential equations appearing in the $K$-, $L$-, and $S$-substeps, see Section \ref{sec:aug-BUG} . This choice ensures both consistency with the global temporal order and computational efficiency, especially in the context of large-scale problems.

An important feature of the proposed low-rank Strang splitting scheme is its robustness with respect to the presence of small singular values. In particular, the linear subproblems are solved exactly through the action of matrix exponentials on low-rank matrices, which preserves the rank structure and avoids numerical instabilities. In the integration of the nonlinear subproblem, this property is inherited directly from the underlying BUG2 integrator~\cite{BUG2}.

In addition, the method can be extended to an adaptive-rank variant in a straightforward manner. This is achieved by truncating the singular values of the evolving solution at each time step according to the tolerance-based criterion~\eqref{adaptive_formula}. The resulting approximation maintains the desired low-rank structure, and the additional truncation error remains controlled by the prescribed threshold $\theta$, as analyzed in~\cite{CKL,BUG2}.

\begin{Remark}

The issue of order reduction in Strang splitting for stiff parabolic equations with nonlinear source terms is well known in the literature. It typically arises due to incompatibilities between the boundary conditions of the differential operator and the structure of the nonlinearity; see, e.g.,~\cite{EO1,EO2}.

In the present setting, such phenomena do not occur thanks to Assumption $(d)$.
\end{Remark}

\section{Numerical experiments}

This section provides numerical evidence that corroborates the theoretical convergence analysis developed in the previous sections for the proposed low-rank Strang splitting method. Our aim is to investigate its convergence behavior, rank approximation and accuracy. We consider benchmark problems drawn from differential Lyapunov equations and semilinear parabolic equations.

 To estimate the experimental convergence order, we rely on the classical Runge rule as in \cite{LO}, based on successive helved step sizes.

\subsection{Experiments with differential Lyapunov equations}

We begin our numerical study by considering the class of differential Lyapunov equations of the form
\begin{equation}
\label{dle}
\dot{X}(t) = A X(t) + X(t) A\trasp + Q, \quad X(t_0) = X_0,
\end{equation}
where \(A, Q, X(t) \in \mathbb{R}^{m \times m}\). The matrix \(A\) arises from the semi-discretization of an elliptic operator, such as the Laplacian with homogeneous Dirichlet boundary conditions. Both \(X_0\) and \(Q\) are chosen symmetric and positive semidefinite.

Under these assumptions, problem \eqref{dle} admits a unique global solution \(X(t)\) which preserves symmetry and positive semidefiniteness for all \(t \ge t_0\), see \cite{OPW} and reference therein. Moreover, since the right-hand side is constant, the Lipschitz constant of the nonlinear term \(G(X) = Q\) vanishes.
\subsubsection{Case 1: Smooth forcing term in the domain of the Laplacian}

We consider a benchmark example adapted from \cite{BUG2}, based on the heat equation with homogeneous Dirichlet boundary conditions:
\[
\frac{\partial u}{\partial t} = \Delta u + g(x, y), \qquad u|_{\partial \Omega} = 0,
\]
posed on the domain \(\Omega = [-\pi,\pi]^2\), with initial data and source term given by:
\[
u_0(x, y) = \sum_{k=1}^{10} 10^{-(k-1)} e^{-k(x^2+y^2)}, \quad
g(x, y) = \sum_{k=1}^{20} \delta_{k,1} \sin(kx) \sin(ky).
\]

The problem is discretized using second-order finite differences on a uniform \(m \times m\) grid with \(m = 500\), yielding the matrix differential equation:
\begin{equation}
\label{heat}
\dot{X}(t) = A X(t) + X(t) A^\top + G, \qquad X(0) = U_0 S_0 U_0^\top,
\end{equation}
where $A$ is the discretized Laplacian 
and \(G\) is a time-independent matrix constructed from the discretized source term. 
The final time is $T = 0.3$.
The compatibility between the source term and the boundary conditions ensures that the second-order convergence of the Strang splitting is preserved. The right-hand side being constant further simplifies the integration of each substep.

\begin{figure}[h]
    \centering
    \includegraphics[width=0.5\linewidth]{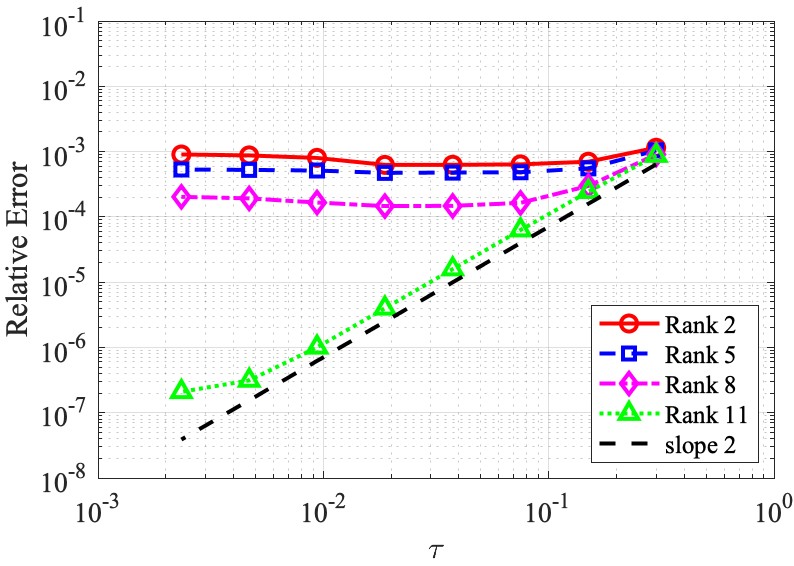}
   
    \caption{Error of the low-rank Strang splitting as a function of the time step \(\tau\) and the approximation rank \(r\), for problem \eqref{heat}. Second-order convergence is observed when the rank is sufficiently high.}
\end{figure}

\subsubsection{Case 2: Order reduction due to incompatible forcing term}

We now consider the differential Lyapunov equation \eqref{dle} with more general data. The initial condition \(X_0\) is a random matrix of rank 10, and the inhomogeneity \(Q\) is a random matrix of rank 5, generated independently of the discretization matrix \(A\).

In this setting, the data do not necessarily satisfy the compatibility assumptions required to ensure regularity of the solution. In particular,  $Q$ is not in the domain of the Laplacian, which implies that the quantity \(A Q\) may grow with the dimension and cannot be uniformly bounded. As a result, the solution lacks the smoothness needed to observe the expected second-order convergence, see also \cite{EO1,EO2,MOPP}.

This is reflected in the following experiment, where we estimate the order of convergence using the Runge rule.

\begin{table}[h!]
    \centering
    \begin{adjustbox}{width=\textwidth}
    \begin{tabular}{|c|c|c|c|c|c|c|c|c|c|c|c|}
        \hline
        $\tau$ & 0.1000 & 0.0500 & 0.0250 & 0.0125 & 0.0063 & 0.0031 & 0.0016 & 0.0008 & 0.0004 & 0.0002 & 0.0001 \\
        \hline
        $p$    & --     & --     & --     & 5.73 & 0.94 & 0.85 & 1.11 & 0.90 & 0.68 & 0.96 & 1.21 \\
        \hline
    \end{tabular}
    \end{adjustbox}
    \caption{Estimated convergence orders for the Lyapunov equation with random forcing term using the Runge rule, for fixed rank $11$, $m=128$ and $T=0.3$. }
    \label{tab_lya}
\end{table}

\begin{figure}[h]
    \centering
   \includegraphics[width=0.5\linewidth]{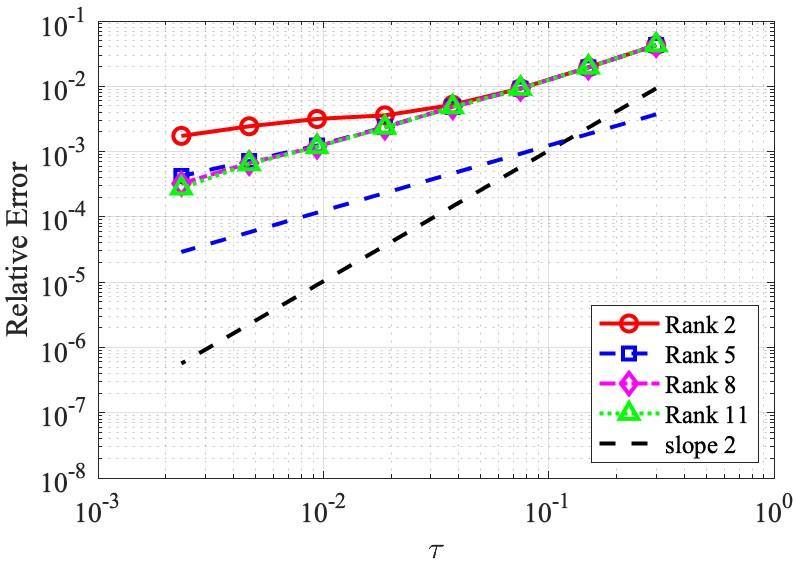}
    
    \caption{Error of the low-rank Strang splitting as a function of the time step and approximation rank for the Lyapunov equation with random low-rank forcing term. The convergence order deteriorates due to the lack of compatibility between the inhomogeneity and the Laplacian.}
\end{figure}

\subsection{Semilinear parabolic problem}

We consider the semilinear parabolic partial differential equation
\[
\partial_t v = \alpha \Delta v + v^3, \qquad v(0,x,y) = 16\,x(1-x)y(1-y),
\]
posed on the spatial domain $\Omega = [0,1]^2$ with homogeneous Dirichlet boundary conditions. The diffusion coefficient is set to $\alpha = 1/50$. The cubic nonlinearity  acts pointwise, see \cite{OPW}.

Spatial discretization is performed using second-order central finite differences on a uniform grid with $m = 500$ interior points per spatial direction. This yields a matrix differential equation of the form
\begin{equation}
\label{cubica}
\dot{U} = \alpha \left(AU+U A^\top\right)  + U^{\circ 3}, \qquad U(0) = U_0,
\end{equation}
where $U = U(t) \in \mathbb{R}^{m \times m}$, $A$ is the discretized Laplacian, and $U^{\circ 3}$ denotes the elementwise (Hadamard) cube of $U$.

To generate a reference solution, we employ the full-rank Strang splitting method with a very fine time step $\tau = 10^{-6}$. Figure~\ref{singval} displays the decay of the first ten singular values of $U(t)$ at final time $T = 0.3$. The fast decay suggests that low-rank approximation is appropriate for this problem.

\begin{figure}[h]
    \centering
   \includegraphics[width=0.5\linewidth]{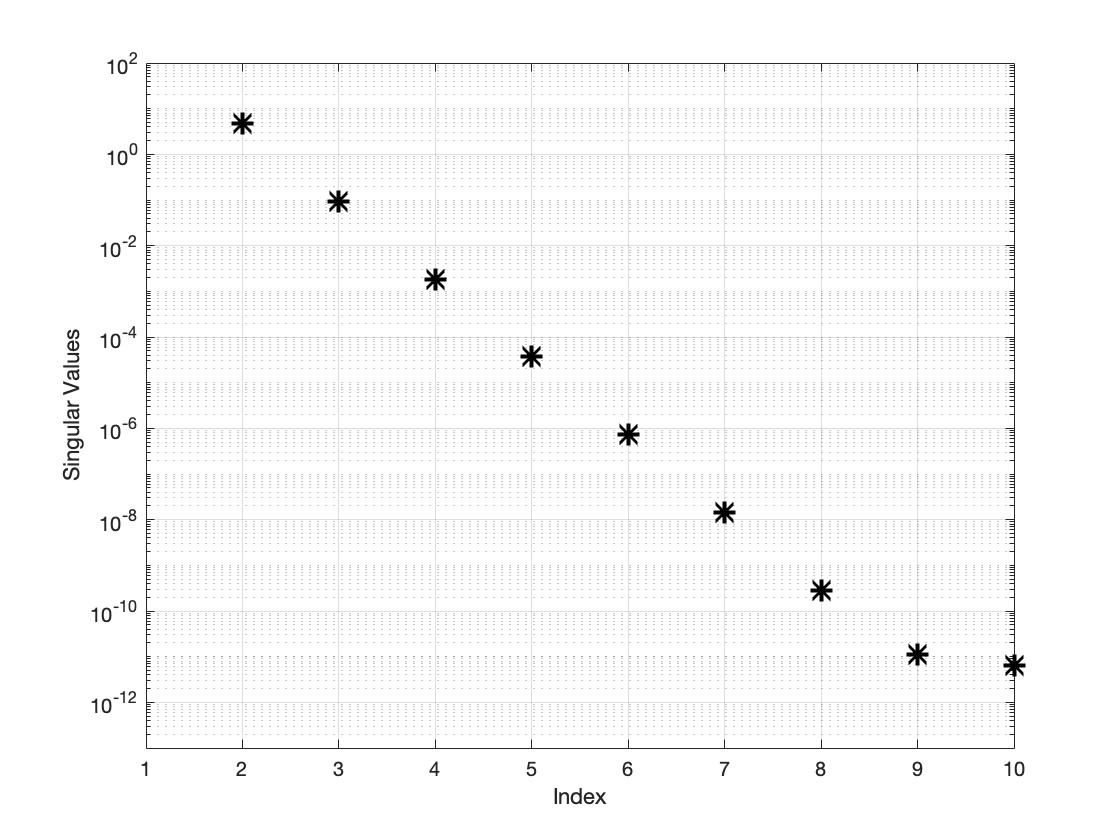}
  \caption{Decay of the first 10 singular values of the numerical reference solution, computed by the full-rank Strang splitting and time step $\tau = 10^{-6}$, at time $T=0.3$.}
    \label{singval}
\end{figure}

In Figure~\ref{errcubica}, we report the Frobenius norm error of the low-rank Strang splitting approximation with varying approximation ranks and time steps. For ranks 4 and 5, the scheme attains second-order accuracy, consistent with the theory. For lower ranks, the error stagnates, indicating that the rank is insufficient to capture the solution dynamics.
when the
approximation rank is too low, the outer error becomes independent of any step
size refinement.

\begin{figure}[h]
    \centering
    \includegraphics[width=0.5\linewidth]{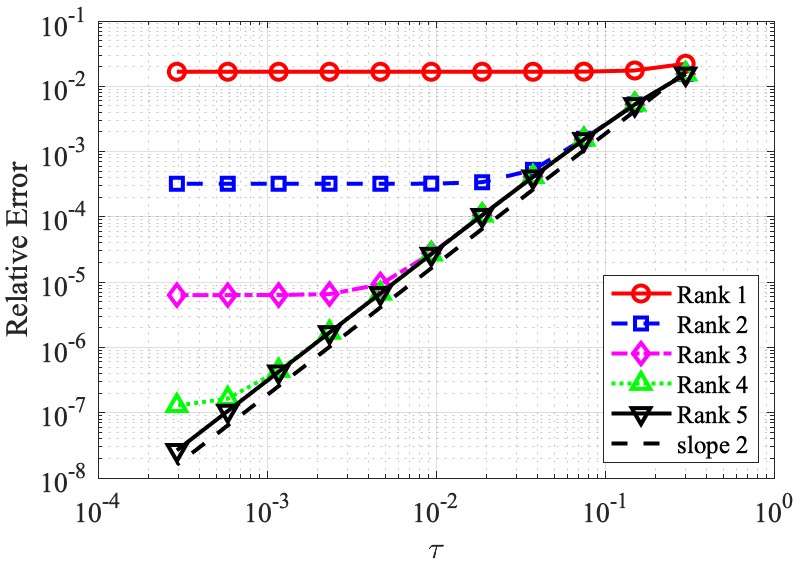}

    \caption{Frobenius norm error of the low-rank Strang splitting method as a function of time step and rank for problem~\eqref{cubica}. Second-order convergence is observed for ranks $\geq 4$.}
    \label{errcubica}
\end{figure}

To further assess the flexibility of the scheme, we test its rank-adaptive version on the same problem. Starting with rank 3, the algorithm dynamically increases the rank using the truncation criterion described in~\cite{CKL}, with threshold $\theta = 10^{-8}$. The integration is carried out with a fixed time step $\tau = 0.005$. The results are shown in Figure~\ref{adaptive}.

\begin{figure}[h]
    \centering
    \includegraphics[width=0.5\linewidth]{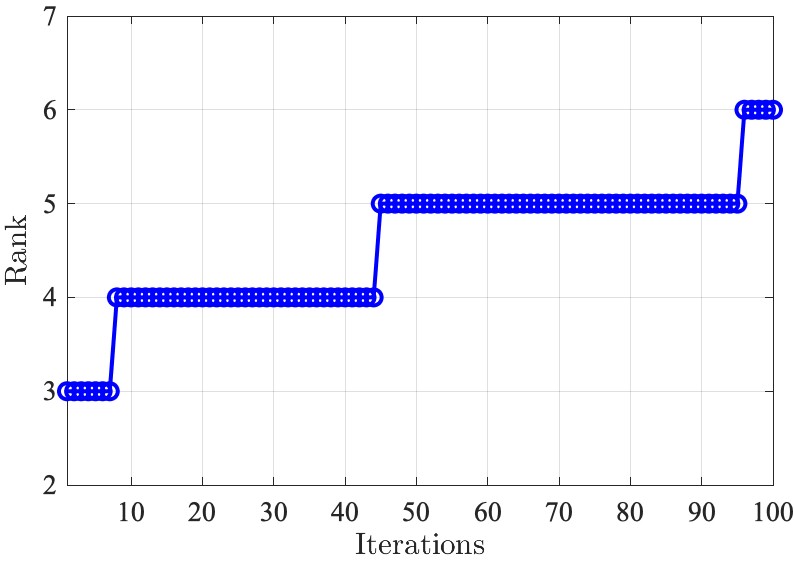}

    \caption{Rank-adaptive integration for problem~\eqref{cubica}, starting from rank 3. Time step $\tau = 0.005$, truncation threshold $\theta = 10^{-8}$.}
    \label{adaptive}
\end{figure}

\section*{Conclusion}
In this paper, we have proposed a low-rank Strang splitting method for matrix differential equations of the form \eqref{eq}. We have explicitly stated the assumptions on the problem that guarantee second-order convergence and provided a thorough analysis of the various error components. Our analysis clarifies the behavior of the method in all relevant scenarios, including cases where order reduction occurs. This comprehensive understanding not only justifies the use of Strang splitting in the existing literature but also provides a foundation for alternative constructions and extensions of the method, potentially guiding future developments in low-rank time integration techniques.

\section*{Acknowledgments}
 The authors are members of the INdAM-GNCS (Gruppo Nazionale di Calcolo Scientifico).
N.G. acknowledges that his research was supported by funds from the Italian MUR (Ministero dell'Universit\`a e della Ricerca) within the PRIN 2022 Project ``Advanced numerical methods for time dependent parametric partial differential equations with applications''  and PRIN-PNRR grant FIN4GEO. 
C. S. is partially supported by PRIN 2022 projects  (20229P2HEA), CUP: E53C24002280006, by PRIN PNRR  2022 - Prog. P20228C2PP- CUP: E53D23017940001 and by the European Union - NextGenerationEU under the Italian Ministry of University and Research (MUR) National National Centre for HPC, Big Data and Quantum Computing (CN-00000013 -  CUP: E13C22001000006).

\end{document}